%--------------------------------------------------------------------------------------------------
%PREAMBOLO ----------------------------------------------------------------------------------
%--------------------------------------------------------------------------------------------------
\documentclass{amsart}
\usepackage[T1]{fontenc}
\usepackage[english]{babel}

\usepackage{enumitem}
	\setlist[enumerate]{label=\textit{(\roman*)},ref=(\roman*)}

\usepackage{amsmath}
\usepackage{amssymb,amsthm,amsfonts}
\usepackage{mathrsfs,mathtools,braket,esint}
\usepackage[mathscr]{euscript}
\usepackage{tikz}

	\numberwithin{equation}{section}
	\newtheorem{dfn}{Definition}[section]
	\newtheorem{thm}[dfn]{Theorem}

	\newtheorem{lemma}[dfn]{Lemma}
	\newtheorem{rmk}[dfn]{Remark}

	%Simboli --------------------------------------------------------------------------------------------
	\newcommand{\call}[1]{\mathscr{#1}}
	\newcommand{\eps}{\varepsilon}
	\renewcommand{\epsilon}{\eps}
	\renewcommand{\phi}{\varphi}
	\renewcommand{\theta}{\vartheta}
	
	\def\N{\mathbb{N}}
	\def\R{\mathbb{R}}
	\def\Rd{\R^d}	
	\def\de{\mathrm{d}}
	
	\def\D{\mathrm{D}}
	\def\F{\call{F}}
	\def\Ld{\call{L}^d}	
	\def\sign{\mathrm{sign}}
	
	\newcommand{\vass}[1]{\left\lvert#1\right\rvert}
	\newcommand{\norm}[1]{\left\Vert#1\right\Vert}
	\renewcommand{\set}[1]{\left\{#1\right\}}
	\newcommand{\symdif}{\!\bigtriangleup\!}	
	\DeclareMathOperator{\Per}{Per}
	\newcommand{\PerK}{\Per_K}
	%\newcommand{\PerKe}{\Per_\eps}

%BIBLIOGRAFIA ---------------------------------------------------------------------------------
\usepackage{amsrefs}
	
%VARIE ------------------------------------------------------------------------------------------
\usepackage{comment}
\usepackage{hyperref}

\title{Halfspaces minimise nonlocal perimeter: a~proof~\emph{via}~calibrations}
\author[V. Pagliari]{Valerio Pagliari}
\address{Dipartimento di Matematica,
		Universit\`a di Pisa, 
		Largo B. Pontecorvo 5, 56127 Pisa, Italy.
		E-mail: \href{mailto:pagliari@mail.dm.unipi.it}{\tt pagliari@mail.dm.unipi.it}}

\begin{document}

\begin{abstract}
We consider a nonlocal functional $J_K$
that may be regarded as a nonlocal version of the total variation.
More precisely, for any measurable function $u\colon \Rd \to \R$,
we define $J_K(u)$ as the integral of weighted differences of $u$.
The weight is encoded by a positive kernel $K$, possibly singular in the origin.
We study the minimisation of this energy under prescribed boundary conditions,
and we introduce a notion of calibration suited for this nonlocal problem.
Our first result shows that
the existence of a calibration is a sufficient condition for a function to be a minimiser.
As an application of this criterion, we prove that
halfspaces are the unique minimisers of $J_K$ in a ball, provided they are admissible competitors.
Finally, we outline how to exploit the optimality of hyperplanes
to recover a $\Gamma$-convergence result concerning the scaling limit of $J_K$.

\medskip
	{\footnotesize		
		\noindent \textbf{2010 Mathematics Subject Classification:}
		49Q20,   	%Variational problems in a geometric measure-theoretic setting
		49Q05,   	%Minimal surfaces		
		35R11.   	%Fractional partial differential equations
		
		\noindent \textbf{Keywords and phrases:}
		nonlocal minimal surfaces, nonlocal calibrations,
		fractional perimeter, $\Gamma$-convergence.
	}
\end{abstract}

\maketitle
	
	\section{Introduction}
	We consider the $d$-dimensional vector space $\Rd$ 
	equipped with the Euclidean inner product $\cdot$.
	In this note, we show that halfspaces are the unique local minimisers of the nonlocal functional
	\begin{align}\label{eq:JK}\begin{split}
	J_K(u;\Omega) & \coloneqq \frac{1}{2}\int_{\Omega}\int_{\Omega}K(y-x)\vass{u(y)-u(x)}\de y \de x \\
	& \quad +\int_{\Omega}\int_{\Omega^c}K(y-x)\vass{u(y)-u(x)}\de y \de x,
	\end{split}
	\end{align}
	where $\Omega \subset \Rd$ is a Lebesgue measurable set and $\Omega^c$ is its complement,
	while $u$ and $K$ are positive Lebesgue measurable functions on $\Rd$.
	Further hypotheses on the reference set $\Omega$
	and on the kernel $K$ are stated below, see Subsection \ref{sec:setup}.
	
	We recall that when $u=\chi_E$ is the characteristic function of the Lebesgue measurable set $E\subset\Rd$,
	that is $\chi_{E}(x)=1$ if $x\in E$ and $\chi_{E}(x)=0$ otherwise,
	then $J_K$ can be understood as a nonlocal perimeter of the set $E$ in $\Omega$.
	More generally, $J_K(u;\Omega)$ may be regarded as a nonlocal total variation of $u$ in $\Omega$.
	
	Nonlocal perimeters were firstly introduced by Caffarelli, Roquejoffre, and Savin \cite{CRS}
	to the purpose of describing phase field models
	that feature long-range space interactions.
	In their work, $K(x) = \left| x \right|^{-d-s}$, with $s\in(0,1)$.
	Subsequently, many authors have extended the analysis in several directions,
	and by now the literature has become vast;
	as a narrow list of papers that are more closely related to ours,
	we suggest that the interested reader may consult \cites{CV,ADM,CSV,MRT,CN} and the references therein.
	
	Let $B$ be the open unit ball in $\Rd$ with centre in the origin,
	put $\mathbb{S}^{d-1}\coloneqq \partial B$,
	and let $\call{L}^d$ be the $d$-dimensional Lebesgue measure.
	Our aim is proving the following:
	
	\begin{thm}\label{stm:piani}
		For all $\hat{n}\in\mathbb{S}^{d-1}$,
		we define $H\coloneqq \set{x\in\Rd : x\cdot \hat{n} > 0}$.
		Then,
		\[	J_K( \chi_H; B ) \leq J_K( v; B )	\]
		for all $\call{L}^d$-measurable $v\colon \Rd \to [0,1]$
		such that $v(x)=\chi_H(x)$ for $\call{L}^d$-a.e. $x\in B^c$.
		
		Moreover, for any other minimiser $u$ satisfying the same constraint,
		it holds $u(x)=\chi_H(x)$ $\call{L}^d$-a.e. $x\in \Rd$.
	\end{thm}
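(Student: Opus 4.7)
The plan is to apply the nonlocal calibration criterion established earlier in the paper: to prove that $\chi_H$ minimises $J_K(\cdot;B)$ under the prescribed boundary datum, it suffices to exhibit an explicit calibration associated with it.

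Fix $\hat{n}\in\mathbb{S}^{d-1}$. The candidate I would try is the antisymmetric kernel
\[
\zeta(x,y) \coloneqq K(y-x)\,\sign\bigl((y-x)\cdot \hat{n}\bigr), \qquad (x,y)\in\Rd\times\Rd.
\]
Assuming $K$ is symmetric under $z\mapsto -z$, the function $\zeta$ is antisymmetric in $(x,y)$ and satisfies the pointwise bound $\vass{\zeta(x,y)}\le K(y-x)$. The decisive feature is that $\zeta$ saturates this bound precisely on the pairs at which $\chi_H$ jumps: if $x\notin H$ and $y\in H$, then $x\cdot \hat{n}\le 0<y\cdot \hat{n}$, so $(y-x)\cdot \hat{n}>0$ and therefore $\zeta(x,y)=K(y-x)=K(y-x)\bigl(\chi_H(y)-\chi_H(x)\bigr)$; the symmetric case is identical. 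The ``nonlocal divergence-free'' axiom expected from the criterion reduces, after the change of variable $z=y-x$, to $\int_{\Rd}K(z)\,\sign(z\cdot \hat{n})\,\de z=0$, which holds (in the principal-value sense when $K$ fails to be integrable at the origin) by the symmetry of $K$. Feeding $\zeta$ into the calibration criterion then yields $J_K(\chi_H;B)\le J_K(v;B)$ for every admissible $v$.

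For the uniqueness clause I would analyse the equality case of the calibration inequality. Saturation forces $\sign(u(y)-u(x))=\sign\bigl((y-x)\cdot \hat{n}\bigr)$ for $\Ld\otimes\Ld$-almost every pair $(x,y)$ with $u(y)\ne u(x)$, so any other minimiser $u$ must be monotone in the direction $\hat{n}$ in this nonlocal sense. Coupling this condition with the prescribed values $u=\chi_H$ on $B^c$ and pairing each $x\in B$ with suitable $y\in B^c$ on either side of the hyperplane $\{x\cdot \hat{n}=0\}$ excludes any value of $u$ on $B$ different from $\chi_H(x)$, thus $u=\chi_H$ almost everywhere in $\Rd$.

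The main obstacle I expect is the interplay with the singularity of $K$ at the origin: for strongly singular kernels such as $K(z)=\vass{z}^{-d-s}$, the cancellation $\int K(z)\,\sign(z\cdot \hat{n})\,\de z=0$ holds only through symmetric truncations, so one has to verify the calibration axioms in the principal-value sense and reassemble the Fubini-type exchange of integrals on which the criterion relies. The uniqueness propagation step is equally delicate, as it requires the support of $K$ to be sufficiently large for pairs straddling $\partial B$ in the direction $\hat{n}$ to pin down the interior values of any competing minimiser.
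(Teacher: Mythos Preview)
Your approach is essentially the paper's: exhibit the sign function as a nonlocal calibration for $\chi_H$ and invoke the general criterion (Theorem~\ref{stm:plateau}). One cosmetic discrepancy: in the paper's Definition~\ref{stm:calib} the calibration is normalised by $|\zeta|\le 1$ with $K$ kept as a separate weight, so the calibration for $H$ is simply $\zeta(x,y)=\sign((y-x)\cdot\hat n)$ rather than $K(y-x)\sign((y-x)\cdot\hat n)$; your version is an equivalent repackaging. Your worry about principal values is already absorbed into axiom~\eqref{eq:div=0}, which is phrased as a limit over $B(x,r)^c$, so no extra work is needed there.

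For uniqueness the paper takes a slightly different route. From the saturation identity it first mollifies $u$ to upgrade the a.e.\ monotonicity along $\hat n$ to a genuine pointwise statement, then shows every superlevel set $\{u>t\}$ coincides (up to null sets) with a halfspace $\{x\cdot\hat n\ge\lambda_t\}$, and finally uses the boundary datum on $B^c$ to force $\lambda_t=0$. Your shortcut---fixing a.e.\ $x\in B$ and comparing with points $y\in B^c$ on either side via Fubini---is a valid and arguably more direct alternative. Your stated concern about the support of $K$ is not an obstacle here: since the paper assumes $K>0$, the second assertion of Theorem~\ref{stm:plateau} delivers the saturation identity for a.e.\ pair $(x,y)$, which is all your pairing argument needs.
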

	
	The proof that we propose relies on a general criterion for minimality, see Theorem \ref{stm:plateau},
	which in turn involves a notion of calibration fitted to nonlocal problem at stake,
	see Definition \ref{stm:calib}.
	
	Let us outline the structure of this note.
	In the next Subsection, we precise the mathematical framework of this paper
	and we set the notations in use.
	Section \ref{sec:criterion} contains the definition of nonlocal calibration
	and the proof of Theorem \ref{stm:piani}.
	Lastly, in Section \ref{sec:Gammaconv}, as a possible application of our main result,
	we discuss its role in the analysis of the scaling limit of the functional $J_K$.
	
	\subsection{Set-up and notations}\label{sec:setup}
	We remind that we work in $\Rd$, the $d$-dimensional Euclidean space,
	endowed with the inner product $\cdot$
	and the associated norm $\left| \, \cdot \, \right|$.
	We let $\call{L}^d$  and $\call{H}^{d-1}$ be respectively
	the $d$-dimensional Lebesgue
	and the $(d-1)$-dimensional Hausdorff measure on $\Rd$.
	We shall henceforth omit to specify the measure w.r.t. which a set or a function is measurable,
	when the measure is $\Ld$ or the product $\Ld \otimes \Ld$ on $\Rd \times \Rd$;
	analogously, we shall use the expression ``a.e.'' in place of
	``$\Ld$-a.e.'' and of ``$\Ld \otimes \Ld$-a.e.''.
	If $u$ and $v$ are measurable functions, we shall also write ``$u=v$ in $E$''
	as a shorthand for ``$u(x)=v(x)$ for a.e. $x\in E$''.
	
	In this note, $\Omega\subset \Rd$ is an open and connected reference set
	such that $\call{L}^d(\Omega)\in(0,+\infty)$.
	Later on, in Section \ref{sec:Gammaconv}, some regularity on the boundary $\partial \Omega$ will be required.
	
	For what concerns the kernel $K\colon \Rd \to [0,+\infty]$,
	it is not restrictive to assume that is even, i.e.
	\[
	K(x) = K(-x) \qquad \text{a.e. } x\in\Rd.
	\]
	Besides, we suppose that
	\begin{equation} \label{eq:summK}
	\int_{\Rd} \left( 1 \wedge \left| x \right| \right) K(x) \de x < +\infty,
	\end{equation}
	where, if $t,s\in\R$, $t \wedge s$ equals the minimum between $t$ and $s$.
	This condition entails that $K \in L^1(B(0,r)^c)$ for all balls $B(0,r)$ with centre in the origin and radius $r>0$;
	in particular, $K$ might have a non-$L^1$ singularity in $0$.
	The main example of functions that fulfil \eqref{eq:summK} is given
	by fractional kernels \cites{CRS,L}, i.e. kernels of the form
	\[
	K(x) = \frac{a(x)}{ \vass{x}^{d+s} },
	\]
	where $a\colon \Rd \to \R$ is an even function such that $0<\lambda \leq a(x) \leq \Lambda$
	for some $\lambda,\Lambda\in \R$ and $s\in(0,1)$.
	
	A faster decay at infinity for $K$ will be needed in Section \ref{sec:Gammaconv},
	see \eqref{eq:fastK}.
	
	We are interested in a variational problem concerning $J_K$,
	to which we shall informally refer as \emph{Plateau's problem}.
	Precisely, given a Lebesgue measurable set $E_0 \subset \Rd$
	such that $J_K( \chi_{E_0}; \Omega)<+\infty$, we define the family
	\begin{equation}\label{eq:F}
	\call{F}\coloneqq\set{v\colon \Rd \to [0,1] : v \text{ is measurable and }
		v = \chi_{E_0} \text{ in } \Omega^c},
	\end{equation}
	and we address the minimisation of $J_K(\,\cdot\,;\Omega)$ in the class $\call{F}$;
	namely, we consider
	\begin{equation}\label{eq:plateau}
	\inf\set{J_K(v;\Omega) : v\in\call{F}}.
	\end{equation}
	
	\begin{rmk}[Truncation]
		For $s\in\R$, let us set $T(s) \coloneqq \big( (0 \vee  s) \wedge 1\big)$
		($t \vee s$ is the maximum between the real numbers $t$ and $s$).
		Observe that $T \circ \chi_{E_0} = \chi_{E_0}$ and $J_K( T \circ u; \Omega ) \leq J_K( u; \Omega )$,
		so the infimum in \eqref{eq:plateau} equals
		\[
		\inf\set{J_K(v;\Omega) :
			v\colon \Rd \to \R \text{ is measurable and }
			v = \chi_{E_0} \text{ in } \Omega^c
		}.
		\]
		We therefore see that choice of $\call{F}$ as the class of competitors is not restrictive.
	\end{rmk}
	
	\begin{rmk}[The class of competitors is nonempty]
		Standing our assumptions on $\Omega$,
		any set $E$ that has finite perimeter in $\Omega$ satisfies $J_K( \chi_E; \Omega ) <+\infty$,
		see \cites{MRT,BP}
		We shall recall the definition of finite perimeter set later in this Subsection.
	\end{rmk}
	
	As the functional $J_K(\,\cdot\,; \Omega)$ is convex,
	when $\Omega$ has finite measure,
	existence of solutions to \eqref{eq:plateau} can be established
	by the direct method of calculus of variations
	(see \cite{BP}; see also \cite{CSV} for an approach via approximation by smooth sets). 
	In particular, as consequence of the following coarea-type formula:
	\begin{equation}\label{eq:coarea}
	J_K(u;\Omega) = \int_{0}^{1}\PerK(\set{u>t};\Omega)\de t,
	\end{equation}	
	there always exists a minimiser which is a characteristic function.
	Indeed, for any $u\colon \Rd \to [0,1]$,	
	there exists $t^\ast\in\R$ such that $\PerK(\set{u>t^\ast}; \Omega)\leq J_K(u; \Omega)$,
	otherwise \eqref{eq:coarea} would be contradicted.
	Thus, if $u$ is a minimiser of \eqref{eq:plateau},
	then $\chi_{\set{u>t^\ast}}$ is minimising as well.
	
	Formula \eqref{eq:coarea} can be easily validated,
	see for instance \cites{CSV,CN}.
	The family of functionals on $L^1(\Omega)$ such that a generalised Coarea Formula holds
	was firstly introduced by Visintin \cite{V}. 
	
	It is well-known that
	existence of solutions to the classical counterpart of \eqref{eq:plateau}
	may be proved in the framework of geometric measure theory.
	We remind here some basic facts, while we refer to the monographs \cites{Ma,AFP}
	for a thorough treatment of the subject.
	
	We say that $u\colon \Omega \to \R$ is a \emph{function of bounded variation} in $\Omega$,
	and we write $u\in\mathrm{BV}(\Omega)$,
	if $u\in L^1(\Omega)$ and
	\[
	\vass{\D u}(\Omega) \coloneqq
	\sup\set{ \int_{\Omega} u(x)\mathrm{div}\zeta(x) \de x
		: \zeta\in C^{\infty}_c (\Rd;\Rd), \norm{\zeta}_{L^\infty}\leq 1}
	< + \infty.
	\]
	We dub $\vass{\D u}(\Omega)$ the \emph{total variation} of $u$ in $\Omega$.
	We also say that a measurable set $E$ is a \emph{set of finite perimeter} in $\Omega$
	when its characteristic function $\chi_E$ is a function of bounded variation in $\Omega$, and,
	in this case, we refer to
	$\Per(E;\Omega) \coloneqq \vass{\D \chi_E}(\Omega)$ as \emph{perimeter} of $E$ in $\Omega$.
	In this framework,
	the result that parallels the existence of solutions to \eqref{eq:plateau} reads as follows:
	there is a set $E$ with finite perimeter in $\Omega$ such that
	$\Per(E;\Omega)$ attains
	\begin{equation}\label{eq:cl-plateau}
	\inf\set{\vass{\D u}(\Omega) :
		u\colon \Rd \to [0,1] \text{ is measurable and }
		u = \chi_{E_0} \text{ in } \Omega^c}.
	\end{equation}
	
	Finite perimeter sets stand as measure-theoretic counterparts of smooth hypersurfaces.
	For example, we may equip them with an \emph{inner normal}:
	for any $x\in \mathrm{supp}\vass{\D\chi_E}$, we define
	\begin{equation}\label{eq:in-norm}
	\hat n(x)\coloneqq  \lim_{r\to 0^+}\frac{\D\chi_E(B(x,r))}{\vass{\D\chi_E}(B(x,r))},
	\end{equation}
	where $\D \chi_E$ is the distributional gradient of $\chi_E$
	and $B(x,r)$ is the open ball of centre $x$ and radius $r>0$.
	A fundamental result by De Giorgi \cite{D} states that
	\begin{equation*}\label{eq:per-Haus}
	\Per(E;\Omega)=\call{H}^{d-1}(\partial^\ast E\cap\Omega),
	\end{equation*}
	where
	\[
	\partial^\ast E \coloneqq \set{x\in \Rd : \hat n(x) \mbox{ exists and } \vass{\hat n(x)}=1}
	\]
	is the so-called \emph{reduced boundary} of $E$.
	In addition, for any $x\in\partial^\ast E$,
	\begin{equation}\label{eq:blowup}
	\frac{E-x}{r} \to \set{y\in\Rd : y\cdot \hat n(x) > 0} \quad\mbox{as } r\to 0^+ \mbox{ in } L^1_\mathrm{loc}(\Rd) .
	\end{equation}
	
	Once existence of solutions to \eqref{eq:cl-plateau} is on hand,
	a useful criterion to substantiate the minimality of a given competitor
	is provided by means of calibrations.
	The notion of calibration may be expressed in very general terms (see \cites{Mo,HL} and references therein);
	as far as we are concerned,
	we say that a (classical) \emph{calibration} for the finite perimeter set $E$
	is a divergence-free vector field  $\zeta\colon \Rd \to \Rd$
	such that $\left| \zeta(x) \right| \leq 1$ a.e. and
	$\zeta(x) = \hat{n}(x)$ for $\call{H}^{d-1}$-a.e. $x\in\partial^\ast E$.
	It can be shown that if the set $E$ admits a calibration,
	then its perimeter equals the infimum in \eqref{eq:cl-plateau}.
	The goal of the next Section is validating a nonlocal analogue of this principle.
	A similar analysis has been independently carried out by Cabr\'e in \cite{C}.
	In that work, the author proposes a notion of calibration akin to ours.
	He exploits it to establish optimality of nonlocal minimal graphs
	and to give as well a simplified proof of a result in \cite{CRS}
	stating that minimisers of the Plateau's problem satisfy
	a zero nonlocal mean curvature equation in the viscosity sense.
	
	\section{Minimality \emph{via} calibrations}\label{sec:criterion}
	In this Section,
	we propose a notion of calibration adapted to the current nonlocal setting,
	and we show that the existence of a calibration
	is a sufficient condition for a function $u$
	to minimise the energy $J_K$ w.r.t compact perturbations.
	Then, we show that halfspaces admit calibrations,
	and thus we infer their minimality.
	
	We remind that we assume that $\Rd\times\Rd$ is equipped with the product measure $\Ld \otimes \Ld$.
	
	\begin{dfn}\label{stm:calib}
		Let $u\colon \Rd \to [0,1]$ and	$\zeta\colon \Rd\times\Rd \to \R$ be measurable functions.
		We say that $\zeta$ is a \emph{nonlocal calibration} for $u$ if the following hold:
		\begin{enumerate}
			\item $\vass{\zeta(x,y)}\leq 1$ for a.e. $(x,y)\in\Rd\times\Rd$;
			\item for a.e. $x\in\Rd$,
			\begin{equation}\label{eq:div=0}
			\lim_{r\to 0^+}\int_{B(x,r)^c} K(y-x)\left(\zeta(y,x)-\zeta(x,y)\right)\de y = 0;
			\end{equation}
			\item for a.e. $(x,y)\in\Rd\times\Rd$ such that $u(x)\neq u(y)$,
			\begin{equation}\label{eq:nlnormal}
			\zeta(x,y)(u(y)-u(x))=\vass{u(y)-u(x)}.
			\end{equation}
		\end{enumerate}
	\end{dfn}
	
	The next remark collects some comments about the definition above.
	
	\begin{rmk}
		Let $\zeta\colon \Rd\times\Rd \to \R$ be a calibration  for $u\colon \Rd \to [0,1]$.
		\begin{enumerate}
			\item It is not restrictive to assume that $\zeta$ is antisymmetric:
			indeed, $\tilde\zeta(x,y)\coloneqq ( \zeta(x,y)-\zeta(y,x) )/2$ is a calibration for $u$ as well.
			\item In view of \eqref{eq:summK}, the integral in \eqref{eq:div=0} is convergent for each $r>0$.
			We can regard \eqref{eq:div=0} as a nonlocal counterpart
			of the vanishing divergence condition that is prescribed for classical calibrations.
			Such nonlocal gradient and divergence operators where introduced in \cite{GO},
			and they have already been exploited
			to study nonlocal perimeters by Maz\'on, Rossi, and Toledo in \cite{MRT},
			where the authors propose a notion of $K$-calibrable set
			in relation to a nonlocal Cheeger energy.
			\item Suppose that $u=\chi_E$ for some measurable $E\subset \Rd$.
			By \eqref{eq:nlnormal}, $\zeta$ must satisfy 
			\[
			\zeta(x,y) = \begin{cases}
			-1	& \text{if } x\in E, y\in E^c \\
			1	& \text{if } x\in E^c, y\in E.
			\end{cases}
			\]
			Heuristically, this means that 
			the calibration gives the sign of the inner product
			between the vector $y-x$ and the inner normal to $E$ at the ``crossing point'',
			provided the boundary of $E$ is sufficiently regular (see Figure \ref{fig:zeta}).
			Indeed, if we imagine to displace a particle from to $x$ and $y$,
			$\zeta$ equals $-1$ when the particle exits $E$,
			and it equals $1$ if the particles enters $E$.
		\end{enumerate}
	\end{rmk}
	
	\begin{figure}
		\caption{If $\zeta$ is a calibration for the set $E$
			(i.e. for $\chi_E$) and $x,y$ are as in the picture,
			then $\zeta(x,y)=-1$.}
		\label{fig:zeta}
		\centering
		\begin{tikzpicture}[scale=1.5,rotate=-20]
		\path (-0.1,-0.4) node{$E$};
		
		\draw (-1,0) .. controls (0,1) and (0,0) .. (1,0);
		\draw (1,-1) arc (90:270:-0.5);
		\draw (-0.5,-2) .. controls (0,-1) and (0,-1) .. (1,-1);
		\draw (-0.5,-2) .. controls (-1.5,-2) and (-0.5,-1) .. (-1,0);
		
		\path (0.4,-0.6) node{$x$};
		\filldraw [black] (0.5,-0.5) circle (0.5pt);
		%\filldraw [black] (1.5,0.5) circle (0.3pt);
		\path (1.6,0.6) node{$y$};
		\path (1.15,-0.26) node{$\hat n$};
		\draw[->] (0.5,-0.5) -- (1.5,0.5);
		\draw[->] (1,0) -- (1,-0.5);
		\draw[dashed] (0.3,0) -- (1.7,0);
		\end{tikzpicture}
	\end{figure}
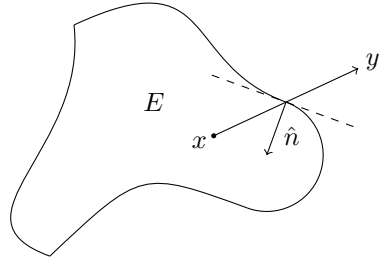
	
	Our criterion reads as follows:
	
	\begin{thm}\label{stm:plateau}
		Let $E_0\subset \Rd$ be a measurable set such that $J_K(\chi_{E_0};\Omega) <+\infty$,
		and let $\F$ be the family in \eqref{eq:F}.
		If for some $u\in\F$ there exists a calibration $\zeta$, then
		\[J_K(u;\Omega)\leq J_K(v;\Omega)\quad \text{for all } v\in\call{F}.\]
		
		Moreover, if $\tilde u\in\call{F}$ is another minimiser,
		then $\zeta$ is a calibration for $\tilde u$ as well.
	\end{thm}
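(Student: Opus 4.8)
The plan is to compare $J_K(\,\cdot\,;\Omega)$ with the \emph{linearised} functional
\[
\mathcal{E}(v):=\frac12\int_{\Omega}\!\int_{\Omega}K(y-x)\zeta(x,y)\big(v(y)-v(x)\big)\de y\de x
+\int_{\Omega}\!\int_{\Omega^c}K(y-x)\zeta(x,y)\big(v(y)-v(x)\big)\de y\de x ,
\]
obtained by replacing $\vass{v(y)-v(x)}$ with $\zeta(x,y)(v(y)-v(x))$ in \eqref{eq:JK}. By item \textit{(i)} of Definition~\ref{stm:calib} one has $\zeta(x,y)(v(y)-v(x))\le\vass{v(y)-v(x)}$ pointwise, so $J_K(v;\Omega)\ge\mathcal{E}(v)$ for every $v$, while item \textit{(iii)} turns this into an equality for $v=u$. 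I would record the resulting identity
\[
J_K(v;\Omega)=J_K(u;\Omega)+D(v)+\big(\mathcal{E}(v)-\mathcal{E}(u)\big),\qquad
D(v):=J_K(v;\Omega)-\mathcal{E}(v)\ge 0,
\]
valid whenever $J_K(v;\Omega)<+\infty$ (otherwise the inequality is trivial). Both halves of the theorem thus reduce to the claim that $\mathcal{E}$ depends only on the trace on $\Omega^c$, i.e.\ $\mathcal{E}(v)=\mathcal{E}(u)$ for all $v\in\F$: the first half follows from $D(v)\ge 0$, and the second because, if $\tilde u\in\F$ is another minimiser, then $0=D(\tilde u)+(\mathcal{E}(\tilde u)-\mathcal{E}(u))$ forces $D(\tilde u)=0$; since $K>0$ a.e., this yields $\zeta(x,y)(\tilde u(y)-\tilde u(x))=\vass{\tilde u(y)-\tilde u(x)}$ for a.e.\ $(x,y)$, which is item \textit{(iii)} for $\tilde u$, and items \textit{(i)}--\textit{(ii)} do not involve $u$, so $\zeta$ calibrates $\tilde u$ as well.

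To establish the claim, set $w:=v-u$, so that $w=0$ in $\Omega^c$ and, by linearity, $\mathcal{E}(v)-\mathcal{E}(u)=\mathcal{E}(w)$. Replacing $\zeta$ by its antisymmetric part (allowed by the Remark following Definition~\ref{stm:calib}) and folding the quadrants $\Omega^c\times\Omega$ and $\Omega^c\times\Omega^c$ into the integral, one gets
\[
\mathcal{E}(w)=\frac12\int_{\Rd}\!\int_{\Rd}K(y-x)\zeta(x,y)\big(w(y)-w(x)\big)\de y\de x ,
\]
an absolutely convergent integral because $K(y-x)\vass{w(y)-w(x)}\in L^1(\Rd\times\Rd)$ — a consequence of \eqref{eq:summK}, of $\vass w\le 1$, and of $J_K(v;\Omega),J_K(u;\Omega)<+\infty$. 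The next step is a \emph{nonlocal integration by parts}: for $r>0$, let $\mathcal{E}^r(w)$ be the same integral restricted to $\vass{y-x}>r$. Since $K\in L^1(B(0,r)^c)$, on that region $w(x)$ and $w(y)$ may be integrated separately, so that Fubini's theorem together with the substitution $(x,y)\mapsto(y,x)$, the evenness of $K$, and the antisymmetry of $\zeta$ give
\[
\mathcal{E}^r(w)=-\int_{\Omega}w(x)\Big(\int_{B(x,r)^c}K(y-x)\zeta(x,y)\de y\Big)\de x .
\]
On the one hand $\mathcal{E}^r(w)\to\mathcal{E}(w)$ by dominated convergence, with dominating function $\tfrac12 K(y-x)\vass{w(y)-w(x)}$; on the other hand, item \textit{(ii)} says that for a.e.\ $x$ the inner integral above tends to $0$ as $r\to0^+$. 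The claim is that these two facts combine to give $\mathcal{E}(w)=0$.

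The step I expect to be the real obstacle is precisely this last passage to the limit. The truncated nonlocal divergence $x\mapsto\int_{B(x,r)^c}K(y-x)\zeta(x,y)\de y$ converges to $0$ a.e., but, when $K$ is singular at the origin, it is \emph{not} bounded — let alone dominated — uniformly in $r$, so one cannot simply exchange $\int_\Omega$ with $\lim_{r\to0^+}$ in the last display. Making that exchange legitimate is the technical heart of the argument: I would attempt it either by proving an equi-integrability estimate for the family $\{\int_{B(x,r)^c}K(y-x)\zeta(x,y)\de y\}_{r>0}$ on $\Omega$ (exploiting $\vass\zeta\le1$ and \eqref{eq:summK}), or by first establishing the claim for $w$ in a dense subclass — Lipschitz functions with compact support in $\Omega$, for which $K(y-x)\vass{w(y)-w(x)}\lesssim K(y-x)(1\wedge\vass{y-x})$ is manifestly integrable and the divergence theorem is transparent — and then extending it to general $w$ by continuity in the $J_K(\,\cdot\,;\Omega)$-seminorm, or else by invoking directly a nonlocal Gauss--Green formula for the operators of \cite{GO} in the spirit of \cite{MRT}. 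Everything else — the folding of the quadrants, the Fubini computation, the monotone convergence $J_K^r(\,\cdot\,;\Omega)\uparrow J_K(\,\cdot\,;\Omega)$, and the a.e.\ identities extracted from items \textit{(i)}--\textit{(iii)} — is routine.
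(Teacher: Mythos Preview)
Your approach is essentially the paper's: linearise $J_K$ via $\zeta$, show the linearised functional is constant on $\F$ using antisymmetry of $\zeta$ and the divergence-free condition \textit{(ii)}, and read off both conclusions from the resulting equality. The only cosmetic difference is that the paper decomposes $\mathcal{E}(v)$ into three pieces $a(v)+b_1(v)+b_0$ and proves $a(v)+b_1(v)=0$, whereas you subtract and work with $w=v-u$; the underlying computation is the same.

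The point you flag as ``the real obstacle'' is well taken, and you should be aware that the paper's own proof performs exactly this step without comment. After rewriting
\[
a(v)=-\lim_{r\to0^+}\int_\Omega v(x)\Big(\int_{B(x,r)^c\cap\Omega}K(y-x)\zeta(x,y)\,\de y\Big)\de x
\]
(which is legitimate, by dominated convergence on the original integrand $K(y-x)\zeta(x,y)(v(y)-v(x))$), the paper passes from this to $-b_1(v)$ by invoking the pointwise limit furnished by \textit{(ii)}. That is precisely the exchange of $\lim_{r\to0^+}$ with $\int_\Omega$ you are worried about, and no domination for the truncated divergences is provided. So your caution does not indicate a divergence from the paper's argument; rather, you have located a step the paper leaves unjustified. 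Of the remedies you propose, the density route is probably cleanest if one wants to make the argument fully rigorous for singular $K$.
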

	
	\begin{proof}
		By the definitions of $J_K(\,\cdot\,;\Omega)$, $\zeta$, and $\call{F}$,
		for any $v\in\call{F}$,
		\begin{equation}\label{eq:lowbound0}
		J_K(v;\Omega)\geq a(v) + b_1(v) + b_0,
		\end{equation}
		where
		\begin{equation*}\begin{gathered}
		a(v)\coloneqq \frac{1}{2}\int_{\Omega}\int_{\Omega}K(y-x)\zeta(x,y)(v(y)-v(x))\de y\de x,\\
		b_1(v)\coloneqq - \int_{\Omega}\int_{\Omega^c} K(y-x) \zeta(x,y) v(x)\de y \de x, \\
		b_0\coloneqq \int_{\Omega}\int_{\Omega^c} K(y-x) \zeta(x,y) \chi_{E_0}(y)\de y \de x.		
		\end{gathered}\end{equation*}
		Since it is not restrictive to assume that $J_K(v;\Omega)$ is finite,
		we can suppose that  $a(v)$, $b_1(v)$, and $b_0$  are finite as well.
		
		We claim that it suffices to prove that $a(v)=-b_1(v)$
		to grant the minimality of $u$.
		Indeed, $a(v)=-b_1(v)$ yields
		\begin{equation}\label{eq:lowbound}
		J_K(v;\Omega)\geq b_0 \quad\text{for all } v\in\call{F},
		\end{equation}
		and we remark that the lower bound $b_0$ is attained by $u$,
		because equality holds in \eqref{eq:lowbound0} for this function.
		Therefore, $u$ is a minimiser.
		
		Now, we prove that $a(v)=-b_1(v)$ for all $v\in\call{F}$.
		Recalling that we can assume $\zeta$ to be antisymmetric,
		we have
		\begin{equation*}
		a(v)=-\int_{\Omega}\int_{\Omega}K(y-x)\zeta(x,y)v(x)\de y\de x.
		\end{equation*}
		Also, \eqref{eq:div=0} gets
		\[\begin{split}
		0 & = -2\lim_{r\to 0^+}\int_{B(x,r)^c} K(y-x) \zeta(x,y)\de y \\
		& = -2\lim_{r\to 0^+}\int_{B(x,r)^c \cap \Omega} K(y-x) \zeta(x,y)\de y
		-2\int_{\Omega^c} K(y-x) \zeta(x,y)\de y,
		\end{split}\]
		whence
		\[
		a(v) = -\lim_{r\to 0^+} \int_\Omega\int_{B(x,r)^c \cap \Omega} K(y-x) \zeta(x,y) v(x)\de y \de x
		= -b_1(v).
		\]
		
		Next, let $\tilde u\in\call{F}$ be another minimiser of $J_K(\,\cdot\,;\Omega)$,
		that is  $J_K(\tilde u;\Omega)=b_0$.
		Our purpose is proving that for a.e. $(x,y)\in\Rd\times\Rd$ such that $ \tilde u(x)\neq\tilde u(y) $ it holds
		\begin{equation}\label{eq:optcond}
		\zeta(x,y)\left(\tilde u(y)-\tilde u(x)\right) = \vass{\tilde u(y)-\tilde u(x)}.
		\end{equation}		
		First of all, note the equality holds
		for a.e. $(x,y)\in\Omega^c \times \Omega^c$, because $u=\tilde u$ in $\Omega^c$. 
		Furthermore, from \eqref{eq:lowbound0} we have
		\[b_0=J_K(\tilde u;\Omega)\geq a(\tilde u)+b_1(\tilde u)+b_0 =b_0,\]
		thus
		\[\begin{split}
		\frac{1}{2}\int_{\Omega}\int_{\Omega}K(y-x)
		\left[\vass{\tilde u(y)-\tilde u(x)}-\zeta(x,y)(\tilde u(y)-\tilde u(x))\right]\de y \de x \\
		+\int_{\Omega}\int_{\Omega^c}K(y-x)
		\left[\vass{\tilde u(y)-\tilde u(x)}-\zeta(x,y)(\tilde u(y)-\tilde u(x))\right]\de y \de x = 0.
		\end{split}\]
		The integrand appearing in the previous identity is positive,
		therefore we deduce that \eqref{eq:optcond} is satisfied for a.e. $(x,y)\in\Omega\times\Rd$.
		Eventually, in the case $x\in\Omega^c$ and $y\in\Omega$,
		we achieve the conclusion by exploiting the antisymmetry of $\zeta$.		
	\end{proof}
	
	We take advantage of the previous theorem
	to prove that halfspaces are the unique local minimisers of $J_K(\,\cdot\,;B)$.
	This property has already been shown for fractional kernels in \cites{CRS,ADM}
	by means of a reflection argument, which in fact turns out to be effective
	whenever $K$ is radial and strictly decreasing \cite{BP}.
	Here, we are able to deal with the case when the kernel is neither monotone nor radial.
	
	We start with the following lemma,
	whose proof is a simple verification:
	\begin{lemma}
		Given $\hat{n}\in\mathbb{S}^{d-1}$, let us set
		\[	
		\zeta(x,y)\coloneqq \sign((y-x)\cdot \hat{n})
		\qquad\text{and}\qquad
		H\coloneqq \set{x\in\Rd : x\cdot \hat{n} > 0}.
		\]
		Then, $\zeta$ is a calibration for $\chi_H$.
	\end{lemma}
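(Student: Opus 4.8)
The plan is to verify, one by one, the three conditions of Definition \ref{stm:calib} for the pair $u=\chi_H$, $\zeta(x,y)=\sign((y-x)\cdot\hat n)$. Condition (i) is immediate, since $\sign$ takes values in $\{-1,0,1\}$, so that $\vass{\zeta(x,y)}\leq 1$ for every $(x,y)\in\Rd\times\Rd$. I would also note at the outset that $\zeta$ is antisymmetric: indeed $\zeta(y,x)=\sign((x-y)\cdot\hat n)=-\sign((y-x)\cdot\hat n)=-\zeta(x,y)$.

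For condition (ii), the antisymmetry just recorded turns the integrand of \eqref{eq:div=0} into $-2K(y-x)\zeta(x,y)$, and the translation $z=y-x$ rewrites the integral there as $-2\int_{B(0,r)^c}K(z)\sign(z\cdot\hat n)\,\de z$. Since $K$ is even and $z\mapsto\sign(z\cdot\hat n)$ is odd, the integrand is an odd function of $z$; by \eqref{eq:summK} it is moreover integrable on $B(0,r)^c$, so its integral over that symmetric set vanishes for every $r>0$. Thus the limit in \eqref{eq:div=0} equals $0$ for every $x$, in particular for a.e.\ $x\in\Rd$.

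For condition (iii), I would consider the two cases in which $\chi_H(x)\neq\chi_H(y)$. If $x\in H$ and $y\in H^c$, then $x\cdot\hat n>0$ while $y\cdot\hat n\leq 0$, hence $(y-x)\cdot\hat n\leq-x\cdot\hat n<0$ and $\zeta(x,y)=-1$; since also $\chi_H(y)-\chi_H(x)=-1$, we get $\zeta(x,y)\big(\chi_H(y)-\chi_H(x)\big)=1=\vass{\chi_H(y)-\chi_H(x)}$. The case $x\in H^c$, $y\in H$ is symmetric: there $(y-x)\cdot\hat n\geq y\cdot\hat n>0$, so $\zeta(x,y)=1$ and again $\zeta(x,y)\big(\chi_H(y)-\chi_H(x)\big)=1=\vass{\chi_H(y)-\chi_H(x)}$. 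This yields \eqref{eq:nlnormal} and finishes the proof. I do not expect any genuine obstacle; the one place asking for a little care is the parity argument in condition (ii), where the summability assumption \eqref{eq:summK} has to be invoked in order to conclude that the integral of the odd integrand over the symmetric domain $B(0,r)^c$ is zero.
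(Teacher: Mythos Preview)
Your verification of the three conditions is correct and matches the paper's intent: the paper itself does not spell out a proof, stating only that it ``is a simple verification,'' and your argument is precisely that verification. The only detail worth recording is that the parity/integrability step in condition~(ii) is exactly what the paper's standing assumption~\eqref{eq:summK} is there for, as you note.
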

	
	Now, we prove Theorem \ref{stm:piani}.
	\begin{proof}[Proof of Theorem \ref{stm:piani}]
		In view of Theorem \ref{stm:calib} and of the Lemma above,
		we deduce that $\chi_H$ is a minimiser of the problem under consideration.
		Hence, we are left to prove uniqueness.
		
		Let $u\colon \Rd \to [0,1]$ be another minimiser. 
		The second assertion in Theorem \ref{stm:calib} grants that
		$\zeta(x,y)\coloneqq \sign((y-x)\cdot \hat{n})$ is a calibration for $u$ as well,
		so we get
		\begin{equation*}\label{eq:optcond-H}
		\sign((y-x)\cdot \hat{n})(u(y)-u(x))=\vass{u(y)-u(x)}\qquad \text{a.e. $(x,y)\in \Rd\times\Rd$.}
		\end{equation*}
		Let $N\subset \Rd\times\Rd$ be the negligible set
		of couples $(x,y)$ such that the previous equation does not hold;
		then, for all $(x,y) \in N^c \coloneqq \Rd \times \Rd \setminus N$
		such that $x\cdot\hat{n}<y\cdot\hat{n}$, we have $u(x) \leq u(y)$. 
		%\begin{equation}\label{eq:qo-mon}
		%\begin{cases}
		%u(x) = u(y) 		&\text{if } x\cdot\hat{n}=y\cdot\hat{n}\\
		%
		%\end{cases}.
		%\end{equation}
		
		We assert that, in fact, the implication that we have just obtained holds true everywhere in $\Rd\times\Rd$.
		To see this, let $\rho\in L^1(\Rd)$ be a positive, radial function such that $\int\rho =1$
		and whose support is contained in $B$.
		For $\eps \in (0,1)$, we consider the family
		$\rho_\epsilon(x)\coloneqq \eps^{-d}\rho\left( \eps^{-1} x \right)$
		and the convolutions $u_\epsilon\coloneqq\rho_\eps\ast u\colon \Rd \to [0,1]$. %which are uniformly continuous.
		Since $\int\rho =1$, for any couple $(x,y)\in \Rd\times\Rd$ we have
		\[\begin{split}
		u_\epsilon(y) - u_\epsilon(x) = &
		\int_{\Rd\times\Rd} \rho_\epsilon(\xi) \rho_\epsilon(\eta)
		\left[ u(y+\eta) - u(x+\xi) \right]
		\de \xi \de \eta \\
		= & \int_{B(0,\eps) \times B(0,\eps)} \rho_\epsilon(\xi) \rho_\epsilon(\eta)
		\left[ u(y+\eta) - u(x+\xi) \right] \de \xi \de \eta.
		\end{split}\]
		Let us suppose that $\delta \coloneqq (y-x)\cdot \hat{n} > 0$.
		If we choose $\epsilon\in(0,\delta/2)$, 
		we see that $ (x+\xi) \cdot \hat{n} < (y+\eta) \cdot \hat{n} $ for all $ \xi,\eta \in B(0,\eps) $,
		hence $u(x+\xi) \leq u(y+\eta)$ for a.e. $(\xi,\eta) \in B(0,\eps) \times B(0,\eps)$.
		Consequently, for all $(x,y)\in \Rd\times \Rd$, 
		it holds $u_\epsilon(x) \leq u_\epsilon(y)$ provided $\epsilon$ is small enough.
		Letting $\eps \to 0^+$, we find that
		\begin{equation}\label{eq:mon}
		u(x) \leq u(y)	\qquad\text{if } x\cdot\hat{n}<y\cdot\hat{n}
		\end{equation}
		
		Next, we focus on the superlevel sets of $u$: for $t\in(0,1)$, we define
		\[
		E_t \coloneqq \set{ x : u(x) > t},
		\]
		and we observe that if $(x,y)\in E_t  \times E_t^c$, it must be $x\cdot \hat n \geq y \cdot \hat n$,
		otherwise, by \eqref{eq:mon} we would have $u(x) \leq u(y)$.
		Therefore, there exists $\lambda_t \in \R$ such that
		$E_t \subset \set{x : x\cdot \hat{n} \geq \lambda_t}$ and
		$E_t^c \subset \set{y : y\cdot \hat{n} \leq \lambda_t}$,
		whence $\Ld( E_t\, \symdif\,  \set{x : x\cdot \hat{n} \geq \lambda_t} ) = 0$ for all $t\in(0,1)$.
		Recalling that it holds $u = \chi_H$ in $B^c$,
		we infer that $\lambda_t = 0$ and this gets
		\[
		\Ld( E_t \symdif  H ) = 0 \qquad \text{for all $t\in(0,1)$}.
		\]
		
		Summing up, we proved that $u\colon \Rd \to [0,1]$ is a function
		such that, for all $t\in(0,1)$,
		the superlevel set $E_t$ coincides with the halfspace $H$,
		up to a negligible set.
		To reach the conclusion,
		we let $\set{t_k}_{k\in \N}\subset (0,1)$ be a sequence that converges to $0$ when $k\to +\infty$.
		Because it holds
		\[
		\set{ x : u(x) = 0} = \bigcap_{k\in\N} E_{t_k}^c 
		\quad\text{and}\quad
		\set{ x : u(x) = 1} = \bigcap_{k\in\N} E_{1 - t_k},
		\]
		we see that
		$\Ld( \set{ x : u(x) = 0} \symdif  H^c ) = 0$ and $\Ld( \set{ x : u(x) = 1} \symdif  H) = 0$.
		Thus, $u = \chi_H$ in $\Rd$.
	\end{proof}
	
	\section{$\Gamma$-limit of the rescaled energy}\label{sec:Gammaconv}
	In this Section, we outline how to exploit Theorem \ref{stm:piani}
	to study the  limiting behaviour of certain rescalings of the energy $J_K$.
	In precise terms, we are interested in the $\Gamma$-convergence as $\eps \to 0^+$
	of $\set{ J_{K_\epsilon}(\,\cdot\,;\Omega) }$ 
	with respect to the $L^1_{\mathrm{loc}}(\Rd)$-convergence,
	where, for $\epsilon>0$, we let
	\[
	K_\epsilon(x) \coloneqq \frac{1}{\epsilon^d} K\left( \frac{x}{\eps} \right).
	\]
	In \cite{BP}, the analysis has already been carried out by Berendsen and the author of this note
	when $K$ is radial and strictly decreasing,
	but, as we concisely explain in the remainder of this note,
	the same arguments may be conveniently adapted to the current more general setting.
	We shall not deal with all the computations in depth,
	because our main interest here is how to take advantage
	of the minimality of halfspaces.
	This will be apparent in Lemma \ref{stm:sigmaK}.
	We refer to the works in the bibliography for the technical details.
	
	For the sake of completeness, we recall the following definition:
	
	\begin{dfn}[$\Gamma$-convergence]\label{stm:defGammac}
		Let $X$ be a set endowed with a notion of convergence
		and, for $\epsilon>0$, let $f_\epsilon\colon X\to[-\infty,+\infty]$ be a function.
		We say that the family $\set{f_\eps}$
		$\Gamma$-converges as $\epsilon\to0^+$ 	to the function $f_0\colon X\to [-\infty,+\infty]$
		w.r.t. the convergence in $X$ if
		\begin{enumerate}
			\item for any $x_0\in X$ and
			for any $\set{x_\eps}\subset X$ that converges to $x_0$,
			it holds
			\[f_0(x_0)\leq\liminf_{\epsilon\to 0}f_\eps(x_\eps);\] 
			\item for any $x_0\in X$ there exists $\set{x_\eps}\subset X$
			that converges to $x_0$ with the property that
			\[\limsup_{\epsilon\to 0}f_\eps(x_\eps)\leq f_0(x_0).\] 
		\end{enumerate}
	\end{dfn}
	
	When $u\colon \Rd \to  [0,1]$ is a measurable function, 
	let us define
	\begin{gather*}
	J_\eps^1(u; \Omega) \coloneqq\frac{1}{2}\int_\Omega\int_\Omega K_\eps(y-x)\vass{u(y)-u(x)}\de y\de x, \\
	J_\eps^2(u; \Omega) \coloneqq\int_{\Omega}\int_{\Omega^c}K_\eps(y-x)\vass{u(y)-u(x)}\de y \de x,\\
	J_\eps(u; \Omega) \coloneqq J_\eps^1(u; \Omega) + J_\eps^2(u; \Omega).
	\end{gather*}
	Observe that, according to the notation in \eqref{eq:JK}, $J_\eps = J_{K_\eps}$.
	We also introduce the limit functional
	\begin{equation*}
	J_0(u;\Omega) \coloneqq 
	\begin{cases}
	\displaystyle{
		\frac{1}{2}\int_{\Rd} K(z) \left( \int_\Omega \vass{ z \cdot \D u} \right) \de z
	}
	& \text{if } u\in\mathrm{BV}(\Omega), \\
	+\infty & \text{otherwise}.
	\end{cases}
	\end{equation*}
	Our goal is proving the following:
	
	\begin{thm}[$\Gamma$-convergence of the rescaled energy]\label{stm:Gconv}
		Let $\Omega \subset \Rd$ be an open, connected, and bounded set
		with Lipschitz boundary.
		Let also $K\colon \Rd \to [0,+\infty)$ be an even function such that
		\begin{equation}\label{eq:fastK}
		\int_{\Rd} K(x)\vass{x} dx < +\infty.
		\end{equation}
		Then, for any measurable $u\colon \Rd \to [0,1]$ the following hold:
		\begin{enumerate}
			\item\label{stm:lower}
			For any family $\set{u_\eps}$
			that converges to $u$ in $L^1_\mathrm{loc}(\Rd)$,
			we have
			\[
			J_0(u;\Omega) \leq \liminf_{\epsilon\to 0^+} \frac{1}{\eps} J^1_\epsilon(u_\epsilon;\Omega).
			\]
			\item\label{stm:upper}
			There exists a family $\set{u_\eps}$ that converges to $u$ in $L^1_\mathrm{loc}(\Rd)$
			such that
			\[
			\limsup_{\epsilon\to 0^+} \frac{1}{\eps} J_\epsilon(u_\epsilon;\Omega) \leq J_0(u;\Omega).
			\]
		\end{enumerate}
	\end{thm}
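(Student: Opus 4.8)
The plan is to treat the liminf inequality \ref{stm:lower} and the recovery sequence \ref{stm:upper} separately, following the pattern of \cite{BP} but using Theorem~\ref{stm:piani} to identify the surface tension correctly in the present non-radial setting. For the liminf inequality, I would first reduce to the case where $\liminf_\epsilon \frac{1}{\eps}J^1_\epsilon(u_\epsilon;\Omega) < +\infty$ and then argue that the limit function $u$ must lie in $\mathrm{BV}(\Omega)$: changing variables $z = (y-x)/\eps$ writes $\frac{1}{\eps}J^1_\epsilon(u_\epsilon;\Omega)$ as $\frac{1}{2}\int_{\Rd} K(z)\Big(\frac{1}{\eps}\int_{\Omega_\eps^z}\vass{u_\epsilon(x+\eps z)-u_\epsilon(x)}\,\de x\Big)\de z$ where $\Omega_\eps^z$ is the set of $x$ with both endpoints in $\Omega$; for fixed $z$ the inner quantity is a standard difference quotient and, by lower semicontinuity of the (anisotropic) total variation under $L^1_{\mathrm{loc}}$ convergence together with Fatou applied in the $z$ variable, one obtains $J_0(u;\Omega) \leq \liminf_\epsilon \frac{1}{\eps}J^1_\epsilon(u_\epsilon;\Omega)$. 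The key classical input here is that $\liminf_{\eps\to0}\frac{1}{\eps}\int_{A}\vass{u(x+\eps z)-u(x)}\,\de x \geq \int_A \vass{z\cdot \D u}$ for $u\in\mathrm{BV}$, with the left side forcing $\mathrm{BV}$ regularity when finite uniformly in a set of directions spanning $\Rd$ — this is where \eqref{eq:fastK} is used to guarantee that $K$ is genuinely supported in enough directions.

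For the upper bound \ref{stm:upper}, I would construct the recovery sequence by mollification: set $u_\eps \coloneqq \rho_{\sigma(\eps)}\ast u$ for a suitable $\sigma(\eps)\to 0$ chosen to go to zero much slower than $\eps$, so that $u_\eps\to u$ in $L^1_{\mathrm{loc}}$ while $u_\eps$ is smooth on the scale of $\eps$. One then needs to bound $\limsup_\eps \frac1\eps J_\eps(u_\eps;\Omega)$, both the interior part $J^1_\eps$ and the boundary part $J^2_\eps$. For the interior part, Taylor-expanding $u_\eps(y)-u_\eps(x) \approx \D u_\eps(x)\cdot(y-x)$ on the scale $\vass{y-x}\lesssim\eps$ and again changing variables $z=(y-x)/\eps$ yields $\frac1\eps J^1_\eps(u_\eps;\Omega)\approx \frac12\int_\Omega\int_{\Rd}K(z)\vass{z\cdot\D u_\eps(x)}\,\de z\,\de x$, which converges to $J_0(u;\Omega)$ as $\sigma(\eps)\to0$ by continuity of the anisotropic total variation along mollifications; the error terms from the Taylor remainder are controlled by $\eps/\sigma(\eps)$ times $\int K(z)\vass{z}^2$-type quantities, and here the Lipschitz regularity of $\partial\Omega$ is what lets us handle the region near the boundary and the domain $\Omega_\eps^z$ versus $\Omega$. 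For the boundary part $J^2_\eps$, one shows it is negligible after division by $\eps$: since $u_\eps$ is smooth and the interaction kernel $K_\eps$ concentrates at scale $\eps$, and since $u = \chi_{E_0}$ or the prescribed datum only interacts through a boundary layer of width $O(\eps)$, one gets $\frac1\eps J^2_\eps(u_\eps;\Omega) \to 0$ using \eqref{eq:fastK} and $\mathcal{H}^{d-1}(\partial\Omega)<\infty$.

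The main obstacle, and the reason Theorem~\ref{stm:piani} is invoked, is the \emph{matching of constants}: the liminf argument above only produces $J_0$ with the ``naive'' anisotropic density $\frac12\int K(z)\vass{z\cdot\D u}\,\de z$, but a priori the true $\Gamma$-liminf could be strictly larger, with an effective surface tension $\sigma_K(\hat n)$ that is only bounded below by $\frac12\int K(z)\vass{z\cdot\hat n}\,\de z$. The standard way to pin down $\sigma_K(\hat n)$ is to compute the $\Gamma$-limit on a cube with a halfspace boundary condition in direction $\hat n$ and to show that the optimal microscopic profile is the flat interface $\chi_H$ itself; this is precisely the content of Theorem~\ref{stm:piani}, which guarantees that no oscillating competitor beats the halfspace and hence that $\sigma_K(\hat n) = \frac12\int_{\Rd} K(z)\vass{z\cdot\hat n}\,\de z$, matching the upper bound. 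I would isolate this in a separate lemma (the ``$\sigma_K$'' lemma referred to in the text as Lemma~\ref{stm:sigmaK}), prove it by localising the energy near a flat piece of $\partial^\ast\{u>t\}$ via the blow-up \eqref{eq:blowup} and a covering argument, and then combine it with the two halves above to conclude.
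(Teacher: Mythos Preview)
Your liminf argument via Fatou in the $z$-variable plus lower semicontinuity of directional difference quotients is essentially correct, and it is a genuinely different route from the paper's. The paper instead reduces to characteristic functions (via the coarea formula and \cite{CGL}) and proves the liminf by a Fonseca--M\"uller blow-up: one localises at points of $\partial^\ast E$, rescales, and reduces everything to the cell formula of Lemma~\ref{stm:sigmaK},
\[
\sigma_K(\hat p)=\inf\Bigl\{\liminf_{\eps\to0^+}\tfrac{1}{\omega_{d-1}\eps}J^1_\eps(E_\eps;B):E_\eps\to H_{\hat p}\text{ in }L^1(B)\Bigr\}.
\]
It is \emph{here}, and only here, that Theorem~\ref{stm:piani} enters in the paper: after a gluing step to force $E_\eps\symdif H_{\hat p}\subset B_{1-\delta}$, the halfspace minimality gives $J_\eps(E_\eps;B)\ge J_\eps(H_{\hat p};B)$, and together with a bound on the $J^2$ difference and the pointwise limit \eqref{eq:pointlim} this yields $\sigma'_K\ge\sigma_K$. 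Your direct Fatou approach bypasses the cell problem entirely; it is closer to Ponce's method in \cite{P}.

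This makes your final paragraph about ``matching constants'' incoherent with the rest of your proposal. You worry that the ``true $\Gamma$-liminf could be strictly larger'' than $J_0$, but part~\ref{stm:lower} only asks for $J_0\le\liminf$, which your Fatou argument already delivers; any excess would be ruled out by part~\ref{stm:upper}, not by a cell identity. The cell-problem identification and Theorem~\ref{stm:piani} are needed in the blow-up strategy precisely because that strategy first produces the lower bound $\int_{\partial^\ast E\cap\Omega}\sigma'_K(\hat n)\,\de\call{H}^{d-1}$ with an \emph{a priori unknown} density $\sigma'_K$, and one must then prove $\sigma'_K=\sigma_K$. In your approach this step is absent. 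So either drop that paragraph, or replace your liminf argument by the blow-up scheme if you want to display the role of Theorem~\ref{stm:piani} as the paper intends.

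A smaller gap: your Taylor-remainder control for the limsup tacitly needs $\int_{\Rd}K(z)\vass{z}^2\,\de z<\infty$, which is not assumed---only \eqref{eq:fastK} is. The paper avoids this by working on a class of regular sets dense in energy, using the constant recovery sequence together with the pointwise limit \eqref{eq:pointlim}, and then extending by density; if you keep the mollification route, you should replace the second-order Taylor step by a direct appeal to \eqref{eq:pointlim} at fixed $\sigma$ followed by a diagonal argument.
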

	
	We remark that, being $J_\eps^2(\,\cdot\,;\Omega)$ positive,
	Theorem \ref{stm:Gconv} entails
	the $\Gamma$-convergence of $\set{J_\epsilon(\,\cdot\,;\Omega)}$ to $J_0(\,\cdot\,;\Omega)$
	w.r.t. the $L^1_\mathrm{loc}(\Rd)$-convergence.
	Also, note that \eqref{eq:fastK} prescribes a condition that is more stringent than \eqref{eq:summK}.
	
	Several results about the asymptotics of functionals akin to $J_\epsilon$ have been considered in the literature
	\cites{BBM,D,P,MRT,AB}; in particular,
	we wish to mention the following one by Ponce:
	
	\begin{thm}[Corollary 2 and Theorem 8 in \cite{P}]\label{stm:ponce}
		Let $\Omega\subset \Rd$ be an open bounded set with Lipschitz boundary
		and let $u\in \mathrm{BV}(\Omega)$.
		If \eqref{eq:fastK} holds, then
		\begin{equation}\label{eq:pointlim}
		\lim_{\epsilon\to 0^+} \frac{1}{\eps} J^1_\eps(u;\Omega) 
		= J_0(u;\Omega)
		\end{equation}
		Moreover, $J_0(\,\cdot\,;\Omega)$ is the $\Gamma$-limit as $\epsilon\to 0^+$
		of $\set{\epsilon^{-1} J_\epsilon(\,\cdot\,;\Omega)}$ w.r.t. the $L^1(\Omega)$-topology.
	\end{thm}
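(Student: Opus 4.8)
Here is how I would argue. The target is a known statement (essentially Corollary~2 and Theorem~8 of \cite{P}), and the plan is to deduce \eqref{eq:pointlim} from the classical characterisation of $\mathrm{BV}$ functions by difference quotients and then to bootstrap it to the $\Gamma$-convergence assertion. The common starting point is a rescaling: for any measurable $v\colon\Rd\to[0,1]$ the change of variables $y=x+\eps z$ gives
\[
\frac1\eps J^1_\eps(v;\Omega)=\frac12\int_{\Rd}K(z)\,G_\eps[v](z)\,\de z,\qquad
G_\eps[v](z):=\frac1\eps\int_{\Omega\cap(\Omega-\eps z)}\vass{v(x+\eps z)-v(x)}\,\de x,
\]
and similarly $\eps^{-1}J^2_\eps(v;\Omega)=\int_{\Rd}K(z)\,H_\eps[v](z)\,\de z$, where $H_\eps[v](z)$ is the same average taken over the slab $\set{x\in\Omega:x+\eps z\in\Omega^c}$. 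To prove \eqref{eq:pointlim} I would first extend $u\in\mathrm{BV}(\Omega)$ to some $\tilde u\in\mathrm{BV}(\Rd)$ with $\tilde u=u$ on $\Omega$; since $\partial\Omega$ is Lipschitz the extension can be chosen so that $\vass{\D\tilde u}(\partial\Omega)=0$. The elementary bound $\norm{\tilde u(\,\cdot+h)-\tilde u}_{L^1(\Rd)}\le\vass h\,\vass{\D\tilde u}(\Rd)$ gives $G_\eps[u](z)\le\vass z\,\vass{\D\tilde u}(\Rd)$, so $z\mapsto K(z)G_\eps[u](z)$ is dominated by $\vass{\D\tilde u}(\Rd)\,\vass z\,K(z)$, which lies in $L^1(\Rd)$ by \eqref{eq:fastK}. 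For fixed $z\neq0$, writing $\eps z=(\eps\vass z)\,\hat e$ with $\hat e=z/\vass z$, the difference-quotient characterisation of $\mathrm{BV}$ (see, e.g., \cite{BBM}) gives $t^{-1}\norm{\tilde u(\,\cdot+t\hat e)-\tilde u}_{L^1(\Omega)}\to\vass{\D_{\hat e}\tilde u}(\Omega)$ as $t\to0^+$, while the correction from the slab $\Omega\setminus(\Omega-\eps z)$ is at most $\vass z\,\vass{\D\tilde u}\big(\set{\dist(\,\cdot\,,\partial\Omega)<2\eps\vass z}\big)\to\vass z\,\vass{\D\tilde u}(\partial\Omega)=0$. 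Altogether $G_\eps[u](z)\to\int_\Omega\vass{z\cdot\D u}$ for every $z$, and dominated convergence yields \eqref{eq:pointlim}.

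For the lower bound in the $\Gamma$-convergence, let $u_\eps\to u$ in $L^1(\Omega)$ and assume $\liminf_{\eps\to0^+}\eps^{-1}J_\eps(u_\eps;\Omega)<+\infty$. Since $J_\eps\ge J^1_\eps$ and $K\ge0$, Fatou's lemma gives
\[
\liminf_{\eps\to0^+}\frac1\eps J_\eps(u_\eps;\Omega)\ \ge\ \frac12\int_{\Rd}K(z)\,\Big(\liminf_{\eps\to0^+}G_\eps[u_\eps](z)\Big)\,\de z,
\]
so it suffices to show $\liminf_{\eps\to0^+}G_\eps[u_\eps](z)\ge\int_\Omega\vass{z\cdot\D u}$ for every $z\neq0$. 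Fix such a $z$ and $\phi\in C^\infty_c(\Omega)$ with $\norm\phi_{L^\infty}\le1$; for $\eps$ small $\mathrm{supp}\,\phi\subset\Omega\cap(\Omega-\eps z)$, and translating the first term below yields
\[
G_\eps[u_\eps](z)\ \ge\ \frac1\eps\int_\Omega\big(u_\eps(x+\eps z)-u_\eps(x)\big)\phi(x)\,\de x
\ =\ \int_\Omega u_\eps(x)\,\frac{\phi(x-\eps z)-\phi(x)}{\eps}\,\de x .
\]
The difference quotient of $\phi$ converges uniformly to $-z\cdot\nabla\phi$ and is uniformly bounded, while $u_\eps\to u$ in $L^1(\Omega)$ with $0\le u_\eps\le1$; hence the right-hand side tends to $-\int_\Omega u\,(z\cdot\nabla\phi)=-\int_\Omega u\,\mathrm{div}(\phi z)$. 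Taking the supremum over admissible $\phi$ gives $\liminf_{\eps\to0^+}G_\eps[u_\eps](z)\ge\vass{z\cdot\D u}(\Omega)=\int_\Omega\vass{z\cdot\D u}$ when $u\in\mathrm{BV}(\Omega)$; when $u\notin\mathrm{BV}(\Omega)$ that supremum equals $+\infty$, so $\liminf_{\eps\to0^+}\eps^{-1}J_\eps(u_\eps;\Omega)=+\infty=J_0(u;\Omega)$. This establishes the $\liminf$ inequality.

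For the recovery sequence there is nothing to prove when $u\notin\mathrm{BV}(\Omega)$. When $u\in\mathrm{BV}(\Omega)$, I would take the constant family $u_\eps\equiv\tilde u$, where $\tilde u$ is the extension above, further truncated to a fixed compactly supported $\mathrm{BV}$ function (immaterial for what follows). Then $u_\eps\to u$ in $L^1(\Omega)$ trivially, and $\eps^{-1}J^1_\eps(\tilde u;\Omega)=\eps^{-1}J^1_\eps(u;\Omega)\to J_0(u;\Omega)$ by \eqref{eq:pointlim}. For the exterior term, the same increment estimate bounds $H_\eps[\tilde u](z)$ by $\vass z\,\vass{\D\tilde u}\big(\set{\dist(\,\cdot\,,\partial\Omega)<2\eps\vass z}\big)$, which is dominated by $\vass{\D\tilde u}(\Rd)\,\vass z\,K(z)\in L^1(\Rd)$ and tends to $\vass z\,\vass{\D\tilde u}(\partial\Omega)=0$; by dominated convergence $\eps^{-1}J^2_\eps(\tilde u;\Omega)\to0$. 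Hence $\limsup_{\eps\to0^+}\eps^{-1}J_\eps(\tilde u;\Omega)\le J_0(u;\Omega)$, which completes the $\Gamma$-convergence statement.

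The delicate step is the $\liminf$ inequality: unlike in the pointwise limit \eqref{eq:pointlim}, here the increment $\eps z$ shrinks while the competitor $u_\eps$ changes at the same time, so one cannot simply invoke the difference-quotient characterisation of $\mathrm{BV}$. Pairing $G_\eps[u_\eps]$ with a fixed $\phi\in C^\infty_c(\Omega)$ before letting $\eps\to0^+$, and then applying Fatou's lemma in the variable $z$, is exactly the device that lets both limits be taken. A secondary technical ingredient, used for \eqref{eq:pointlim} and for the recovery sequence, is the existence on a Lipschitz domain of a $\mathrm{BV}$ extension whose gradient places no mass on $\partial\Omega$; this is classical, and it is what makes the purely interior functional $J_0(\,\cdot\,;\Omega)$ insensitive to the boundary slab appearing in $J^2_\eps$.
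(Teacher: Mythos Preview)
The paper does not prove Theorem~\ref{stm:ponce}: it is quoted as Corollary~2 and Theorem~8 of Ponce~\cite{P} and used as a black box (identity~\eqref{eq:pointlim} is later invoked in the proof of Lemma~\ref{stm:sigmaK}). There is therefore no in-paper argument to compare your proposal against; you are supplying a proof the author deliberately outsourced.

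That said, your sketch is the standard route to these results and is correct in outline. The change of variables $y=x+\eps z$, the domination $G_\eps[u](z)\le\vass{z}\,\vass{\D\tilde u}(\Rd)$ combined with~\eqref{eq:fastK}, and the recovery sequence built from a $\mathrm{BV}$ extension with $\vass{\D\tilde u}(\partial\Omega)=0$ are exactly the right ingredients for~\eqref{eq:pointlim} and for the $\limsup$ inequality. For the $\liminf$ inequality, freezing $z$, pairing $G_\eps[u_\eps](z)$ with a fixed $\phi\in C^\infty_c(\Omega)$ before letting $\eps\to0^+$, and only then applying Fatou in $z$, is the correct device, and you isolate it clearly. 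One point to tighten: in the case $u\notin\mathrm{BV}(\Omega)$ you write that ``that supremum equals $+\infty$'' as if for every $z$, but the set $\{z\in\Rd:\vass{z\cdot\D u}(\Omega)<+\infty\}$ is a linear subspace (the map $z\mapsto\vass{z\cdot\D u}(\Omega)$ is an extended seminorm), and when $u\notin\mathrm{BV}(\Omega)$ this subspace is merely \emph{proper}, hence Lebesgue-null. So the directional variation is $+\infty$ for a.e.\ $z$, and together with $K\not\equiv0$ this still forces the Fatou integral to diverge; the conclusion stands, but the justification needs this extra sentence.
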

	
	We discuss separately the proofs of statements \ref{stm:lower} and \ref{stm:upper} in Theorem \ref{stm:Gconv}.
	Preliminarly, we remark that
	we only need to study the $\Gamma$-convergence of $J_\epsilon$
	regarded as a functional on measurable sets,
	namely, for $E\subset \Rd$ measurable, we consider
	\begin{gather*}
	J^i_\epsilon(E;\Omega) \coloneqq J^i_\epsilon(\chi_E;\Omega)
	\qquad\text{for } i=1,2, \\
	J_\eps(E;\Omega) \coloneqq J_\epsilon(\chi_E;\Omega),
	\end{gather*}
	and the limit functional
	\[
	J_0(E;\Omega) \coloneqq J_0(\chi_E;\Omega).
	\]
	Indeed, by appealing to results by Chambolle, Giacomini, and Lussardi \cite[Propositions 3.4 and 3.5]{CGL},
	it is possible to recover  the $\Gamma$-convergence of $J_\epsilon$
	as a functional on measurable functions from the analysis of the restrictions;
	this is mainly due to convexity and to the validity of Coarea Formulas.
	
	So, as for the $\Gamma$-upper limit inequality, we need to show that,
	for any given measurable $E\subset \Rd$, there exists a family $\set{E_\eps}$
	that converges to $E$ in $L^1_\mathrm{loc}(\Rd)$ as $\epsilon\to0^+$ such that
	\begin{equation*}
	\limsup_{\epsilon\to 0^+} \frac{1}{\eps} J_\epsilon^1(E_\epsilon;\Omega) \leq 	J_0(E;\Omega).
	\end{equation*}
	Hereafter, by saying that
	the family of sets $\set{E_\eps}$ converges to $E$ in $L^1_\mathrm{loc}(\Rd)$,
	we mean that $\chi_{E_\eps} \to \chi_E$ in $L^1_\mathrm{loc}(\Rd)$.
	
	The desired inequality may be achieved as in \cite{BP}
	by reasoning on a class of sets $\call{D}$
	which is dense w.r.t. the energy $J_0$ among all measurable sets.
	We omit the details, since Theorem \ref{stm:piani} plays no role in this step. 
	
	Now we turn to the proof of the $\Gamma$-lower limit inequality.
	Our task is proving that,
	for any given measurable $E\subset \Rd$ and for any family $\set{E_\eps}$
	that converges to $E$ in $L^1_\mathrm{loc}(\Rd)$ as $\epsilon\to0^+$, it holds
	\begin{equation}\label{eq:lowerlim}
	J_0(E;\Omega) \leq \liminf_{\epsilon\to 0^+} \frac{1}{\eps} J_\epsilon^1(E_\epsilon;\Omega).
	\end{equation}
	
	In \cite{P}, the approach to the $\Gamma$-lower limit inequality relies on
	representation formulas for the relaxations of a certain class of integral functionals.
	Here, following \cite{BP}, we propose a strategy
	which combines the pointwise limit \eqref{eq:pointlim} and Theorem \ref{stm:piani}.
	
	Observe that we can write
	\begin{equation*}
	J_0(E;\Omega) \coloneqq 
	\begin{cases}
	\displaystyle{
		\int_{\partial^\ast E \cap \Omega} \sigma_K(\hat{n}(x)) \de \call{H}^{d-1}(x)
	}
	& \text{if $E$ is a finite perimeter set in $\Omega$,} \\
	+\infty 	& \text{otherwise},
	\end{cases}
	\end{equation*}
	where $\hat{n}\colon \partial^\ast E \to \mathbb{S}^{d-1}$ is
	the measure-theoretic inner normal of $E$ (recall \eqref{eq:in-norm})
	and $\sigma_K\colon \Rd \to [0,+\infty)$ is the anisotropic norm
	\begin{equation}
	\sigma_K(p) \coloneqq \frac{1}{2}\int_{\Rd} K(z)\vass{z\cdot p}\de z,
	\qquad\text{for } p\in\Rd.
	\end{equation}
	
	\begin{rmk}[The radial case \cite{BP}]
		When $K$ is radial, $J_0$ coincides with De Giorgi's perimeter,
		up to a multiplicative constant that depends on $K$ and on $d$.
		Indeed, if $K(x) = \bar K (\vass{x})$ for some $\bar K\colon [0,+\infty) \to [0,+\infty)$,
		for any $\hat p\in\mathbb{S}^{d-1}$, we have that
		\begin{align*}
		\sigma_K(\hat p) & = \frac{1}{2} \left(\int_{0}^{+\infty} \bar K(r) r^d \de r \right) 
		\int_{\mathbb{S}^{d-1}} \vass{e\cdot \hat p} \de \call{H}^{d-1}(e) \\
		& = \frac{1}{2} \left(\int_{\Rd} K(x)\vass{x} \de x\right)
		\fint_{\mathbb{S}^{d-1}} \vass{e\cdot e_d} \de \call{H}^{d-1}(e),
		\end{align*}
		where $e_d \coloneqq (0,\dots,0,1)$ is the last element of the canonical basis.
	\end{rmk}
	
	By a blow-up argument \textit{\`a la} Fonseca-M\"uller \cite{FM}
	that has already been applied to similar problems \cites{AB,ADM},
	it turns out that the $\Gamma$-lower limit inequality \eqref{eq:lowerlim} holds
	as soon as one characterises the norm $\sigma_K$ 
	in terms of the evaluation on halfspaces
	of the $\Gamma$-inferior limit of $\epsilon^{-1} J_\epsilon(\,\cdot\,;B)$.
	Precisely, we need to validate the following:
	
	\begin{lemma}\label{stm:sigmaK}
		For any $\hat{p}\in\mathbb{S}^{d-1}$,
		\[
		\sigma_K (\hat{p})
		= \inf\set{
			\liminf_{\epsilon\to 0^+} \frac{1}{\omega_{d-1} \eps} J^1_\eps(E_\epsilon;B) :
			E_\eps \to H_{\hat{p}} \text{ in } L^1(B)
		},
		\]
		where $\omega_{d-1}$ is the $(d-1)$-dimensional Lebesgue measure
		of the unit ball in $\R^{d-1}$,
		and $H_{\hat{p}} \coloneqq \set{ x\in\Rd : x\cdot \hat{p} > 0 }$.
	\end{lemma}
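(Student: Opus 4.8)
The plan is to prove the two inequalities behind the claimed equality separately; the point is that Ponce's pointwise limit \eqref{eq:pointlim} already detects the anisotropy $\sigma_K$, while the minimality of halfspaces (Theorem~\ref{stm:piani}) upgrades it to a $\Gamma$-type lower bound at $H_{\hat p}$. For the inequality $\le$ I would test the right-hand side with the constant family $E_\eps\equiv H_{\hat p}$: since $\chi_{H_{\hat p}}\in\mathrm{BV}(B)$, Theorem~\ref{stm:ponce} gives $\eps^{-1}J_\eps^1(H_{\hat p};B)\to J_0(H_{\hat p};B)$, and as $\partial^\ast H_{\hat p}=\set{x:x\cdot\hat p=0}$ carries the constant inner normal $\hat p$, the defining formula of $J_0$ gives $J_0(H_{\hat p};B)=\sigma_K(\hat p)\,\call{H}^{d-1}(\set{x:x\cdot\hat p=0}\cap B)=\omega_{d-1}\sigma_K(\hat p)$; hence the infimum is $\le\sigma_K(\hat p)$. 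The same computation on any ball records $J_0(H_{\hat p};B(0,r))=\omega_{d-1}r^{d-1}\sigma_K(\hat p)$, used below.

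For the inequality $\ge$ I fix $\hat p$ and a family $E_\eps\to H_{\hat p}$ in $L^1(B)$, and may assume $\liminf_\eps\eps^{-1}J_\eps^1(E_\eps;B)<+\infty$. Fix $\mu\in(0,1)$, and for $\rho\in(1-\mu,1)$ let $\tilde E_\eps^\rho$ be the set that coincides with $E_\eps$ on $B(0,\rho)$ and with $H_{\hat p}$ on $B(0,\rho)^c$, an admissible competitor for the Plateau problem in $B(0,\rho)$ with exterior datum $H_{\hat p}$. The rescaled kernel $K_\eps$ is even and satisfies \eqref{eq:fastK} (a fortiori \eqref{eq:summK}), and $\zeta(x,y)\coloneqq\sign((y-x)\cdot\hat p)$ calibrates $\chi_{H_{\hat p}}$ for any even kernel and any bounded domain, so Theorem~\ref{stm:plateau} --- equivalently Theorem~\ref{stm:piani} rescaled to $B(0,\rho)$ --- yields $J_\eps(H_{\hat p};B(0,\rho))\le J_\eps(\tilde E_\eps^\rho;B(0,\rho))$. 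Splitting both sides into $J_\eps^1+J_\eps^2$, using that $\tilde E_\eps^\rho$ and $E_\eps$ carry the same $J_\eps^1$-energy in $B(0,\rho)$, bounding $J_\eps^2(\tilde E_\eps^\rho;B(0,\rho))-J_\eps^2(H_{\hat p};B(0,\rho))\le R_\eps^\rho$ by the reverse triangle inequality (the two energies differ only through the value of the competitor at the interior variable $x$), and using monotonicity of $J_\eps^1$ in the domain, I obtain
\begin{equation*}
J_\eps^1(E_\eps;B(0,\rho))\ \ge\ J_\eps^1(H_{\hat p};B(0,1-\mu))-R_\eps^\rho ,\qquad R_\eps^\rho\coloneqq\int_{B(0,\rho)}\vass{\chi_{E_\eps}(x)-\chi_{H_{\hat p}}(x)}\Big(\int_{B(0,\rho)^c}K_\eps(y-x)\,\de y\Big)\de x .
\end{equation*}

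The only genuine estimate is that of $R_\eps^\rho$. Since the weight $x\mapsto\int_{B(0,\rho)^c}K_\eps(y-x)\,\de y$ blows up as $x\to\partial B(0,\rho)$ for a singular $K$, I do not fix $\rho$ but average over $\rho\in(1-\mu,1)$; Tonelli's theorem and the elementary fact that the $\rho$-measure of $\set{1-\mu<\rho<1,\ |x|<\rho\le|y|}$ does not exceed $\mu\wedge\vass{y-x}$ give
\begin{equation*}
\int_{1-\mu}^1 R_\eps^\rho\,\de\rho\ \le\ \norm{\chi_{E_\eps}-\chi_{H_{\hat p}}}_{L^1(B)}\int_{\Rd}K_\eps(z)\,(\mu\wedge\vass{z})\,\de z\ \le\ \norm{\chi_{E_\eps}-\chi_{H_{\hat p}}}_{L^1(B)}\;\eps\int_{\Rd}K(w)\vass{w}\,\de w ,
\end{equation*}
the last step being the scaling $z=\eps w$ and \eqref{eq:fastK}. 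Hence $\eps^{-1}\int_{1-\mu}^1 R_\eps^\rho\,\de\rho\to0$, so for each $\eps$ one may pick $\rho_\eps\in(1-\mu,1)$ with $\eps^{-1}R_\eps^{\rho_\eps}\to0$. Combining $J_\eps^1(E_\eps;B)\ge J_\eps^1(E_\eps;B(0,\rho_\eps))$ with the displayed inequality at $\rho=\rho_\eps$, the fact that $\eps^{-1}R_\eps^{\rho_\eps}\to0$, and the pointwise limit \eqref{eq:pointlim} in $B(0,1-\mu)$,
\begin{equation*}
\liminf_{\eps\to0^+}\frac1\eps J_\eps^1(E_\eps;B)\ \ge\ J_0(H_{\hat p};B(0,1-\mu))\ =\ \omega_{d-1}(1-\mu)^{d-1}\sigma_K(\hat p) ;
\end{equation*}
letting $\mu\to0^+$ and dividing by $\omega_{d-1}$ shows the infimum is $\ge\sigma_K(\hat p)$, and together with the first paragraph this proves the lemma.

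I expect the crux to be exactly this control of $R_\eps^\rho$: for a genuinely singular kernel, crudely gluing $E_\eps$ to $H_{\hat p}$ across a fixed sphere leaves an interface error that does not vanish after dividing by $\eps$, so the localization radius must be chosen depending on $\eps$; the averaging works only because \eqref{eq:fastK} --- finiteness of the first moment of $K$ --- is the standing hypothesis of Theorem~\ref{stm:Gconv}. The subsidiary points (transferring Theorem~\ref{stm:piani}/Theorem~\ref{stm:plateau} to $K_\eps$ and to balls of arbitrary radius, and evaluating $J_0$ on halfspaces) are routine.
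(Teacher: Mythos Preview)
Your argument is correct. The overall architecture---Ponce's pointwise limit \eqref{eq:pointlim} for the inequality $\sigma_K\ge\sigma'_K$, and the minimality of halfspaces (Theorem~\ref{stm:piani}) for the reverse bound---matches the paper, but the implementation of the harder inequality differs.

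The paper introduces an auxiliary quantity $\sigma''_K$ obtained by restricting the infimum to families with $E_\eps\symdif H_{\hat p}\subset B_{1-\delta}$ for a \emph{fixed} margin $\delta$. With this margin, the $J^2$ error is controlled pointwise by $\vass{y-x}\ge\delta$, giving $\eps^{-1}\vass{J^2_\eps(E_\eps;B)-J^2_\eps(H_{\hat p};B)}\le \tfrac{2}{\delta}\,\Ld(E_\eps\symdif H_{\hat p})\int K\vass{z}\,\de z\to 0$, so $\sigma_K\le\sigma''_K$; the remaining inequality $\sigma''_K\le\sigma'_K$ is then deferred to a separate ``gluing lemma'' cited from \cite{BP}. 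Your proof collapses these two steps: you glue at a variable radius $\rho\in(1-\mu,1)$, apply Theorem~\ref{stm:piani} in $B(0,\rho)$, and then average over $\rho$ so that the interface error $R_\eps^\rho$ picks up the factor $\mu\wedge\vass{y-x}\le\vass{y-x}$, yielding $\eps^{-1}\int_{1-\mu}^1 R_\eps^\rho\,\de\rho\to 0$ directly from \eqref{eq:fastK}. A mean-value choice of $\rho_\eps$ then finishes the job. This De~Giorgi-type averaging makes your argument self-contained (no external gluing lemma) at the price of a slightly heavier computation; the paper's route is more modular but relies on the cited result. Both are valid and rest on the same two pillars.
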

	
	It is in the proof of this Lemma that Theorem \ref{stm:piani} comes into play.
	
	\begin{proof}[Proof of Lemma \ref{stm:sigmaK}]
		For $\hat{p}\in  \mathbb{S}^{d-1}$, let us set
		\begin{equation}\label{eq:sigma'K}
		\sigma'_K(\hat{p}) \coloneqq \inf\set{
			\liminf_{\epsilon\to 0^+} \frac{1}{\omega_{d-1} \eps} J^1_\eps(E_\epsilon;B) :
			E_\eps \to H_{\hat{p}} \text{ in } L^1(B)
		}.
		\end{equation}
		By \eqref{eq:pointlim}, we now that
		\begin{equation}\label{eq:sigmaK-P}
		\sigma_K(\hat{p}) = \lim_{\epsilon\to 0^+} \frac{1}{\omega_{d-1}\eps} J^1_\eps(H_{\hat{p}};B),
		\end{equation}
		hence $\sigma_K (\hat{p}) \geq \sigma'_K(\hat{p})$.
		
		To the purpose of proving the reverse inequality,
		we introduce a third function $\sigma''_K$ and we show that
		$\sigma_K \leq \sigma''_K  \leq \sigma'_K$.
		So, for $\hat{p}\in \mathbb{S}^{d-1}$ and $\delta\in(0,1)$, we let
		\[
		\sigma''_K(\hat{p}) \coloneqq \inf\set{
			\liminf_{\epsilon\to 0^+} \frac{1}{ \omega_{d-1} \eps} J^1_\eps(E_\epsilon;B) :
			E_\eps \to H_{\hat{p}} \text{ in } L^1(B)
			\text{ and }
			E_\eps \symdif H_{\hat{p}} \subset B_{1-\delta}
		},
		\]
		where $B_{1-\delta} \coloneqq B(0,1-\delta)$
		and $E_\eps \symdif H_{\hat{p}}$ is the symmetric difference between $E_\eps$ and $H_{\hat{p}}$.
		We decide not use a notation that
		exhibits the dependence of $\sigma''_K$ on the parameter $\delta$
		because \textit{a posteriori} the values of $\sigma''_K$ are not influenced by it.
		
		We firstly show that $\sigma_K \leq \sigma''_K$.
		Let ${E_\eps}$ be a family of measurable subsets of $\Rd$ such that
		$E_\eps \cap B^c = H_{\hat{p}}\cap B^c$
		and that $E_\eps \to H_{\hat{p}}$ in $L^1(B)$.
		By Theorem \ref{stm:piani},
		we have that
		\begin{align*}
		0 & \leq J_\epsilon(E_\eps; B) - J_\epsilon(H_{\hat{p}}; B)  \\
		& =  J^1_\epsilon(E_\eps; B) - J^1_\epsilon(H_{\hat{p}}; B)
		- \left[ J^2_\epsilon(E_\eps; B) - J^2_\epsilon(H_{\hat{p}}; B) \right].
		\end{align*}
		If we also assume that $E_\eps \symdif H_{\hat{p}} \subset B_{1-\delta}$,
		we see that
		\begin{multline*}
		J^2_\epsilon(E_\eps; B) - J^2_\epsilon(H_{\hat{p}}; B)
		\\ = \int_{E_\eps \cap B_{1-\delta}}\int_{H_{\hat{p}} \cap B^c} K_\eps(y-x) \de y \de x
		-
		\int_{H_{\hat{p}} \cap B_{1-\delta}}\int_{H_{\hat{p}} \cap B^c} K_\eps(y-x) \de y \de x
		\end{multline*}
		and hence, noticing that $\vass{y-x}\geq\delta$ if $x\in B^c$ and $y\in B_{1-\delta}$, 
		\begin{align*}
		\frac{1}{\eps}\vass{ J^2_\epsilon(E_\eps; B) - J^2_\epsilon(H_{\hat{p}}; B) }
		& \leq \frac{2}{\delta}
		\int_{ E_\eps \symdif H _{\hat{p}}} \int_{ B^c } 
		K_\eps (y-x) \frac{\vass{y-x}}{\eps}\de y \de x \\
		& \leq  \frac{2}{\delta} \Ld (E_\eps \symdif H _{\hat{p}})
		\int_{ \Rd } K(z) \vass{z} \de z.
		\end{align*}
		By our choice of $\set{E_\eps}$ and \eqref{eq:fastK}, this yields
		\[ 
		\lim_{\epsilon\to 0^+} \frac{1}{\eps} \vass{ J^2_\epsilon(E_\eps; B) - J^2_\epsilon(H_{\hat{p}}; B) } = 0,
		\]
		whence 
		\begin{align*}
		0 & \leq \liminf_{\eps \to 0^+}   \frac{1}{\eps} \left[ J_\epsilon(E_\eps; B) - J_\epsilon(H_{\hat{p}}; B) \right]  \\
		& =  \liminf_{\eps \to 0^+}   \frac{1}{\eps} \left[ J^1_\epsilon(E_\eps; B) - J^1_\epsilon(H_{\hat{p}}; B) \right].
		\end{align*}
		Recalling \eqref{eq:sigmaK-P} and the definition of $\sigma''_K$,
		we deduce $\sigma_K(\hat{p}) \leq \sigma''_K(\hat{p})$.
		
		To conclude, we are left to show that $\sigma''_K \leq \sigma'_K$.
		This may be done as in the proof of \cite[Lemma 3.11]{BP}
		by means of a suitable ``gluing'' lemma (see also \cite{ADM}).
	\end{proof}

\subsection*{Acknowledgements}
	The author warmly thanks Matteo Novaga for suggesting the problem
	and for providing insights about it.
	Part of this paper was written during a stay of the author
	at the Centre de Math\'ematiques Appliqu\'ees (CMAP) of the \'Ecole Polytechnique in Paris.
	The author thanks the institution for hospitality and
	the Unione Matematica Italiana (UMI) for partially funding the stay
	\emph{via} the \emph{UMI Grants for Ph.D. students}.

\begin{bibdiv}
\begin{biblist}
		\bib{AB}{article}{
			author={Alberti, G.},
			author={Bellettini, G.},
			title={A non-local anisotropic model for phase transitions: asymptotic behaviour of rescaled energies},
			year={1998},
			journal={European Journal of Applied Mathematics},
			volume={9},
			number={3},
			pages={261--284}
		}
	
		\bib{ADM}{article}{
			author = {Ambrosio, L.}
			author = {De Philippis, G.}
			author = {Martinazzi, L.},
			title={Gamma-convergence of nonlocal perimeter functionals},
			year={2011},
			journal={Manuscripta Mathematica},
			volume={134},
			number={3},
			pages={377--403}
		}
	
		\bib{AFP}{book}{
			author = {Ambrosio, L.},
			author = {Fusco, N.},
			author = {Pallara, D.},
			title={Functions of Bounded Variation and Free Discontinuity Problems},
			series={Oxford Science Publications},
			year={2000},
			publisher={Clarendon Press}
		}
		
		%\bib{B}{article}{
		%	AUTHOR = {Berger, M.},
		%	TITLE = {Quelques probl\`emes de g\'eom\'etrie Riemannienne
		%					ou Deux variations sur les espaces sym\'etriques compacts de rang un},
		%	JOURNAL = {Enseignement Math.},
		%	VOLUME = {16},
		%	YEAR = {1970},
		%	PAGES = {73--96},
		%}
		
		\bib{BBM}{article}{
			author = {Bourgain, J.},
			author = {Brezis, H.},
			author = {Mironescu, P.},
			title={Another look at Sobolev spaces},
			year={2001},
			journal={Optimal Control and Partial Differential Equations (J. L. Menaldi, E. Rofman and A. Sulem, eds.)},
			volume={},
			editor={IOS Press},
			pages={439--455}
		}
		
		\bib{BP}{article}{
			author = {Berendsen, J.},
			author = {Pagliari, V.},
			title={On the asymptotic behaviour of nonlocal perimeters},
			journal = {to appear in ESAIM:COCV},
			%year={2019} 
		}
	
		\bib{C}{article}{
			author = {Cabr\'e, X.},
			title={Calibrations and null-Lagrangians for nonlocal perimeters and an application to the viscosity theory},
			journal = {preprint: \href{https://arxiv.org/abs/1905.10790v1}{arXiv:1905.10790}},
			year={2019} 
		}
		
		\bib{CGL}{article}{
			author={Chambolle, A.},
			author={Giacomini, A.},
			author={Lussardi, L.},
			title = {Continuous limits of discrete perimeters},
			journal = {ESAIM: M2AN},
			year = {2010},
			volume = {44},
			number = {2},
			pages = {207--230},
		}
	
		\bib{CMP}{article}{
			author={Chambolle, A.}
			author={Morini, M.}
			author={Ponsiglione, M.},
			title={Nonlocal curvature flows},
			journal={Arch. Ration. Mech. Anal.},
			year={2015},
			volume={218},
			number={3},
			pages={1263--1329}
		}
	
		\bib{CN}{article}{
				author = {Cesaroni, A.},
				author = {Novaga, M.},
				title = {The isoperimetric problem for nonlocal perimeters},
				journal = {Discrete Contin. Dyn. Syst., Ser. S},
				Volume = {11},
				Number = {3},
				Pages = {425--440},
				Year = {2018}
		}
	
	\bib{CRS}{article}{
			author = {Caffarelli, L. A.},
			author= {Roquejoffre, J.-M.},
			author={Savin, O.},
			title = {Nonlocal minimal surfaces},
			journal = { Comm. Pure Appl. Math.},
			volume = {63},
			number={9},
			pages = {1111--1144},
			year = {2010}
		}
	
	\bib{CSV}{article}{
			title={Quantitative flatness results and $BV$-estimates for stable nonlocal minimal surfaces},
			author={Cinti, E.}
			author={Serra, J.}
			author={Valdinoci, E.},
			journal = {to appear in J. Diff. Geom.},
			%year = { 2019 }
		}

	\bib{CV}{article}{
		author={Caffarelli, L.},
		author={Valdinoci, E.},
		title={Uniform estimates and limiting arguments for nonlocal minimal surfaces},
		journal={Calculus of Variations and Partial Differential Equations},
		year={2011},
		volume={41},
		number={1},
		pages={203--240}
	}
	
	\bib{D}{article}{
		author={De Giorgi, E.},
		title={Nuovi teoremi relativi alle misure $(r-1)$-dimensionali in uno spazio a $r$ dimensioni},
		journal={Ricerche Mat.},
		number={4},
		year={1955},
		pages={95-113}
	}
	
	\bib{HL}{article}{
		AUTHOR = {Harvey, R.},
		AUTHOR={Lawson Jr., H. B. },
		TITLE = {Calibrated geometries},
		JOURNAL = {Acta Math.},
		VOLUME = {148},
		YEAR = {1982},
		PAGES = {47--157},
	}
	
	\bib{FM}{article}{
		title={Relaxation of quasiconvex functionals in $BV(\Omega;\R^p)$ for integrands $f(x,u,\nabla u)$},
		author={Fonseca, I.},
		author={M\"uller, S.},
		journal={Arch. Ration. Mech. Anal.},
		volume={123},
		number={1},
		pages={1--49},
		year={1993}
		}
	
	\bib{GO}{article}{
		title={Nonlocal operators with applications to image processing},
		author={Gilboa, G.},
		author={Osher, S.},
		journal={Multiscale Modeling \& Simulation},
		volume={7},
		number={3},
		pages={1005--1028},
		year={2008},
		%publisher={SIAM}
	}
	
	\bib{L}{article}{
		title={Anisotropic fractional perimeters},
		author={Ludwig, M.},
		journal={J. Diff. Geom.},
		volume={96},
		number={1},
		pages={77--93},
		year={2014}
	}
	
	\bib{Mo}{book}{
		title={Geometric Measure Theory: A Beginner's Guide},
		author={Morgan, F.},
		year={2008},
		publisher={Elsevier Science}
	}

	\bib{Ma}{book}{
		title={Sets of Finite Perimeter and Geometric Variational Problems: An Introduction to Geometric Measure Theory},
		author={Maggi, F.},
		series={Cambridge Studies in Advanced Mathematics},
		year={2012},
		publisher={Cambridge University Press}
	}
	
	\bib{MRT}{book}{
		title={Nonlocal perimeter, curvature and minimal surfaces for measurable sets},
		author={Maz\'on, J. M.},
		author={Rossi, J. D.}
		author={Toledo, J.}
		Publisher = {New York, NY: Birkh\"auser}
		year={2019, to appear}
	}

	\bib{P}{article}{,
		author={Ponce, A.}
		title={A new approach to Sobolev spaces and connections to $\Gamma$-convergence},
		journal={Calculus of Variations and Partial Differential Equations},
		year={2004},
		volume={19},
		number={3},
		pages={229--255}
	}
		
	\bib{V}{article}{
		title={Generalized coarea formula and fractal sets},
		author={Visintin, A.},
		journal={Japan J. Indust. Appl. Math.},
		volume={81},
		number={1},
		pages={175-201},
		year={1991}
	}
\end{biblist}
\end{bibdiv}

\end{document}